\numberwithin{equation}{section}
\theoremstyle{plain}
\newtheorem*{rep@theorem}{\rep@title}
\newcommand{\newreptheorem}[2]{%
\newenvironment{rep#1}[1]{%
 \def\rep@title{#2 \ref{##1}}%
 \begin{rep@theorem}}%
 {\end{rep@theorem}}}
\newtheorem{theorem}[equation]{Theorem}
\theoremstyle{remark}
\theoremstyle{definition}
\newcommand{\Lap}{\Delta}
\newcommand{\eps}{\varepsilon}
\def\XXint#1#2#3{{\setbox0=\hbox{$#1{#2#3}{\int}$}
     \vcenter{\hbox{$#2#3$}}\kern-.5\wd0}}
\begin{document}
\title[Allen-Cahn equation on three-spheres]{Low complexity solutions of the Allen-Cahn equation on three-spheres}
\author{Robert Haslhofer, Mohammad N. Ivaki}\thanks{R.H. has been partially supported by NSERC grant RGPIN-2016-04331 and a Connaught New Researcher Award. M.I. has been supported by a Marsden Postdoctoral Fellowship. Both authors thank the Fields Institute for providing an excellent research environment during the thematic program on Geometric Analysis.}

\date{}

\begin{abstract}
In this short note, we prove that on the three-sphere with any bumpy metric there exist at least four solutions of the Allen-Cahn equation with spherical interface and index at most two. The proof combines several recent results from the literature.
\end{abstract}
\maketitle
\section{Introduction}
In this short note, we consider the Allen-Cahn equation,
\begin{equation}\label{eqAC}
-\Lap_g u = \frac{1}{\eps^2}\left(u-u^3\right),
\end{equation}
on $S^3$ endowed with any bumpy metric $g$. The Allen-Cahn equation has its origin as a model for phase transitions \cite{AC79} and has received a lot of attention especially due to its close connections with the theory of minimal surfaces, see e.g., the  surveys \cite{T08,S09,P12}.

In a beautiful recent paper \cite{GG}, Gaspar-Guaraco proved that on any closed manifold the number of solutions of \eqref{eqAC} goes to infinity as $\eps\to 0$. Most of these solutions naturally have high complexity, i.e., large index and interfaces of high genus. In this note, we address the remaining question of finding solutions with low complexity, i.e., solutions with interface of genus zero and low index, and prove:
\begin{theorem}\label{main_thm}
Consider the Allen-Cahn equation \eqref{eqAC} on $S^3$ endowed with any bumpy metric $g$.
Then for any small enough $\eps>0$ there exist at least four solutions with spherical interface and index at most two. More precisely, exactly one of the following alternatives holds:
\begin{enumerate}
\item There are at least two stable solutions with spherical interface, and at least four index-one solutions with spherical interface.
\item There are no stable solutions with spherical interface, at least two index-one solutions with spherical interface, and at least two index-two solutions with spherical interface.
\end{enumerate}
\end{theorem}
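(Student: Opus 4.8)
The plan is to reduce the theorem to a counting statement for embedded minimal $2$-spheres in $(S^3,g)$ and then to read off the dichotomy from Lusternik--Schnirelmann theory. The bridge between the two pictures is the now-standard correspondence between \eqref{eqAC} and minimal surfaces: by the min-max theory of Gaspar--Guaraco together with the multiplicity-one and index-identification theorems of Chodosh--Mantoulidis in three-manifolds, for a bumpy metric and all small $\eps>0$ the low-energy solutions of \eqref{eqAC} are in one-to-one correspondence with the nondegenerate minimal surfaces of $(S^3,g)$, the Morse index of $u$ equalling the Morse index of its interface. Moreover the symmetry $u\mapsto-u$ acts freely on nonconstant solutions, so each minimal surface lifts to exactly \emph{two} solutions of the same index. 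Thus, writing $n_k$ for the number of embedded minimal $2$-spheres of index $k$, it suffices to prove that either (a) $n_0\ge1$ and $n_1\ge2$, or (b) $n_0=0$, $n_1\ge1$ and $n_2\ge1$; doubling then yields the two alternatives.

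First I would produce the minimal spheres by min-max over sweepouts by $2$-spheres. The space of round $2$-spheres in $S^3$ is parametrised by $\R\mathrm{P}^4$, so its $\mathbb{Z}_2$-cohomology supports the cup powers $\bar\lambda,\bar\lambda^2,\bar\lambda^3,\bar\lambda^4$; feeding these into the Allen--Cahn min-max produces four widths $c_\eps(1)\le\cdots\le c_\eps(4)$, realised by solutions of index at most $p$ by the index upper bound for $p$-parameter min-max. The crucial point is that, because the sweepout surfaces are spheres, the Simon--Smith control on the topology of the limit together with the Chodosh--Mantoulidis genus bound forces each interface to be a $2$-sphere; this is what keeps us inside the spherical category for an \emph{arbitrary} bumpy metric, where no curvature sign is available to rule out higher genus a priori.

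The dichotomy itself is the $\mathbb{Z}_2$ Morse theory of the same min-max, read on the space of mod-$2$ flat cycles $\mathcal{Z}_2(S^3;\mathbb{Z}_2)$, which by Almgren's isomorphism is weakly homotopy equivalent to $\R\mathrm{P}^\infty$, so that $H_0=H_1=H_2=\mathbb{Z}_2$. The zero cycle, equivalently the constant solution $u\equiv\pm1$, which has \emph{no} interface, is the global minimum and already accounts for the class in $H_0$. Every stable minimal sphere is therefore a \emph{supernumerary} index-$0$ critical point: in the Morse complex the rank of $C_0$ is $n_0+1$, and since $\dim H_0=1$ the boundary $\partial_1$ must have rank $n_0$, whence $n_1\ge n_0+1$; likewise $\dim H_2=1$ gives $n_2\ge1$. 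If $n_0=0$ these read $n_1\ge1$, $n_2\ge1$, which is case (b), while if $n_0\ge1$ then $n_1\ge n_0+1\ge2$, which is case (a); as the two are separated by whether a stable spherical solution exists, exactly one alternative holds.

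The main obstacle is the genus-zero conclusion for a general bumpy metric: one must run the min-max within the class of sphere sweepouts and invoke the topological control on the min-max limit to guarantee that the interfaces are genuinely spheres rather than higher-genus surfaces of the same index, something the variational construction does not detect by itself. A secondary difficulty is to make the Morse-complex bookkeeping legitimate: one needs White's bumpy metric theorem, transported to \eqref{eqAC} via Chodosh--Mantoulidis, to know that the relevant solutions are nondegenerate and isolated, and one must ensure that the inequalities $n_1\ge n_0+1$ and $n_2\ge1$ are not corrupted by critical points outside the spherical category. Confining both the sweepouts and the index comparison to the spherical regime is exactly what makes the count clean.
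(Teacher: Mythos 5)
Your overall frame---reduce to counting embedded minimal two-spheres by index and then double using the symmetry $u\mapsto-u$---matches the paper, and the target inequalities (if a stable sphere exists then there are at least two index-one spheres; if not, there is an index-one and an index-two sphere) are exactly what the paper establishes. But both engines you propose for proving these are not available, and this is where the proposal has genuine gaps. First, the claimed ``one-to-one correspondence'' between low-energy solutions of \eqref{eqAC} and nondegenerate minimal surfaces is not a theorem of Gaspar--Guaraco or Chodosh--Mantoulidis; what is actually available, and what the paper uses, is the one-directional Pacard--Ritor\'e gluing (each nondegenerate minimal hypersurface yields a solution with that interface), the Chodosh--Mantoulidis index identification for multiplicity-one limits, and the Tonegawa / Tonegawa--Wickramasekera fact that a stable solution with spherical interface produces a stable minimal sphere. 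Second, the multi-parameter sphere-sweepout min-max over $\R\mathrm{P}^4$ with genus control, and the $\mathbb{Z}_2$ Morse complex on $\mathcal{Z}_2(S^3;\mathbb{Z}_2)$ ``restricted to the spherical category,'' are not established in any of the literature this paper could cite: multi-parameter Simon--Smith min-max with genus bounds is open in this generality, a $p$-parameter width only yields index $\le p$ (neither index exactly $p$ nor distinctness of the critical points), and the bookkeeping $n_1\ge n_0+1$, $n_2\ge 1$ would require knowing that no higher-genus or higher-index critical points enter the boundary maps---precisely what you cannot rule out. In particular, the existence of an \emph{index-two} embedded minimal sphere for an arbitrary bumpy metric is itself a hard theorem (Haslhofer--Ketover, proved via mean curvature flow with surgery combined with min-max), not a corollary of a cup-product computation.

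The paper replaces both engines with concrete published results. In case (1), the stable minimal sphere $\Sigma$ obtained from the assumed stable solution bounds two balls, and Ketover--Liokumovich produce an index-one minimal sphere in the interior of each ball; this is the rigorous substitute for your Morse inequality $n_1\ge n_0+1$, and it also gives the distinctness of the two index-one spheres for free. In case (2), the absence of stable spheres lets Simon--Smith one-parameter min-max produce an index-one sphere, and Haslhofer--Ketover produce an index-two sphere. Pacard--Ritor\'e together with the Chodosh--Mantoulidis index continuity (nullity being excluded by bumpyness) then converts each minimal sphere into a pair $\pm u_\eps$ of solutions of the correct index. If you want to salvage your write-up, the minimal fix is to drop the Morse complex and the $\R\mathrm{P}^4$ sweepouts entirely and cite these three geometric inputs directly.
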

The two cases are illustrated in Figure 1 and Figure 2 for the scenarios that $(S^3,g)$ looks like a dumbbell and an elongated ellipsoid, respectively.
\begin{figure}[h]
\centering
\input{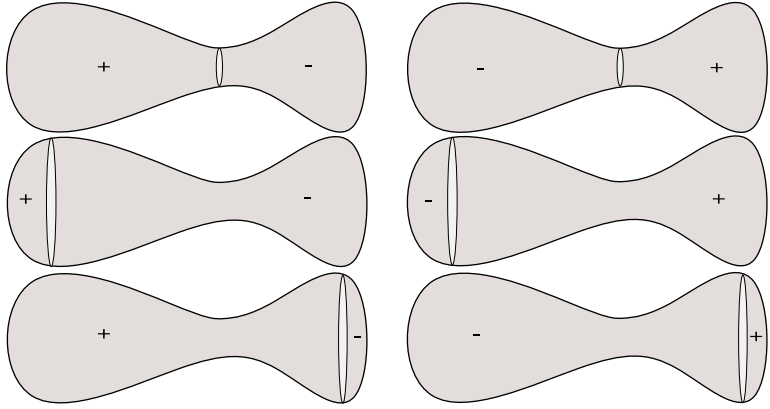}
\caption{Solutions on a dumbbell}
\end{figure}
\begin{figure}[h]
\centering
\input{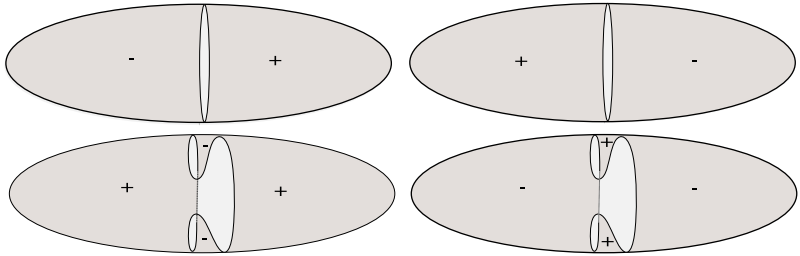}
\caption{Solutions on elongated ellipsoids}
\end{figure}
\newline
The solutions transition from $u_\eps\approx -1$ to $u_\eps \approx +1$ along the spherical interface, and the transition is modeled on the function $\tanh(\frac{t}{\sqrt{2}\eps})$, where $t$ denotes the signed distance from the interface.

We recall that solutions of \eqref{eqAC} are critical points of the energy functional
\begin{equation}
E_\eps[u]=\int_M \frac{\eps}{2}|\nabla u|^2 + \frac{1}{4\eps} (1-u^2)^2\operatorname{dvol}_g
\end{equation}
The index of a critical point is defined as the number of directions in which the second variation of the functional is negative.

We also recall that a Riemannian metric $g$ on a manifold $M$ is called bumpy if $(M,g)$ does not contain any immersed
closed minimal hypersurface with a non-trivial Jacobi field. By a result of White \cite{W91,W17} bumpy metrics are generic in the sense of Baire.

To prove Theorem \ref{main_thm}, we combine several results from the literature, including in particular the gluing construction from Pacard-Ritore \cite{PR03}, the existence result for minimal two-spheres from Haslhofer-Ketover \cite{HK}, and the index estimate from Chodosh-Mantoulidis \cite{CM}. An interesting feature is that the proof relies on some methods from other fields, in particular, mean curvature flow with surgery, that have not been used before in the analysis of the Allen-Cahn equation.
\section{The proof}
\begin{proof}[Proof of Theorem \ref{main_thm}]
We first consider the case that there exists at least one solution $u_\eps$ of \eqref{eqAC} which is stable and has spherical interface (for every small enough $\eps$). Having a spherical interface, means that for $\eps\to 0$ (after passing to a subsequence), the associated varifolds
\begin{equation}
V_\eps(\phi)=\frac{3}{4}\int \eps |\nabla u_\eps (x)|^2 \phi (x,T_x\{ u_\eps = u_\eps(x)\}) \operatorname{dvol}_g(x)
\end{equation}
converge to an embedded minimal two-sphere $\Sigma$, possibly with multiplicity. It follows from classical theory \cite{T95,TW12} (see also \cite{H}) that $\Sigma$ is a stable critical point of the area functional. Thus, by a result of Ketover-Liokumovich \cite[Thm. 1.6]{KL} each of the two balls $B^{\ell/r}$ in $S^3$ bounded by $\Sigma$ contains in its interior an index-one minimal two-sphere $\Sigma^{\ell/r}$. We can now apply the main theorem of Pacard-Ritore \cite{PR03} (see also \cite{P12}) to get solutions $u_\eps^{\ell/r}$ with spherical interface $\Sigma^{\ell/r}$. Since for the Pacard-Ritore solutions $u_\eps^{\ell/r}$ the associated varifolds $V_\eps^{\ell/r}$ converge to $\Sigma^{\ell/r}$ with multiplicity one, it follows from Chodosh-Mantoulidis \cite[Sec. 6]{CM} (see also \cite[Sec. 9]{dPKW13} for closely related computations) that the index is actually continuous under the limit, i.e., for $\eps$ small enough we get
\begin{equation}
\textrm{ind}(u_\eps^{\ell/r})=\textrm{ind}(\Sigma^{\ell/r})=1,
\end{equation}
where the possibility of nullity is ruled out by the bumpyness assumption. Together with the obvious fact that whenever $u$ is a solution of \eqref{eqAC} then $-u$ also is a solution, this finishes the proof in case (1).\\

Consider now the case that \eqref{eqAC} admits no stable solutions with spherical interface. Suppose towards a contradiction that $(S^3,g)$ contains a stable embedded minimal two-sphere $\Sigma$. Then, arguing as above, for $\eps$ small enough the associated Pacard-Ritore solutions would satisfy
\begin{equation}
\textrm{ind}(u_\eps)=\textrm{ind}(\Sigma)=0,
\end{equation}
contradicting our assumption that \eqref{eqAC} has no stable solutions with spherical interface. Hence, by a classical result of Simon-Smith \cite{SS82} (see also \cite[Lem. 3.5]{MN12} and \cite[Lem. 3.2]{HK}) there exists an embedded minimal two-sphere $\Sigma^1\subset (S^3,g)$, which is obtained by one parameter min-max, realizes the 1-width, and has index one (since it can't be stable). By a recent result from Haslhofer-Ketover \cite[Thm. 1.4]{HK}, which has been established using combined efforts from mean curvature flow with surgery \cite{BH,HK17,BHH} and min-max theory \cite{KMN}, there also exists an embedded minimal two-sphere $\Sigma^2\subset (S^3,g)$ which has index two. Arguing as above, for $\eps$ small enough the associated Pacard-Ritore solutions satisfy
\begin{equation}
\textrm{ind}(\pm u^1_\eps)=\textrm{ind}(\Sigma^1)=1,
\end{equation}
and
\begin{equation}
 \textrm{ind}(\pm u^2_\eps)=\textrm{ind}(\Sigma^2)=2.
\end{equation}
This establishes case (2), and thus finishes the proof of the theorem.
\end{proof}

\vspace{10mm}
{\sc Department of Mathematics, University of Toronto, 40 St George Street, Toronto, ON M5S 2E4, Canada}

\emph{E-mail: roberth@math.toronto.edu, m.ivaki@utoronto.ca}

\end{document}